%submitted to JRMS on 6-jan-2016
%corrected the typos pointed out by referee and submitted
%on 17/05/2016

\documentclass[12pt]{amsart}
\setlength{\textheight}{23cm}
\setlength{\textwidth}{16cm}
\setlength{\topmargin}{-0.8cm}
\setlength{\parskip}{0.3\baselineskip}
\hoffset=-1.4cm

\usepackage{amssymb}

\newtheorem{theorem}{Theorem}[section]
\newtheorem{proposition}[theorem]{Proposition}

\numberwithin{equation}{section}

\begin{document}

\baselineskip=15.5pt

\title[Fourier-Mukai transform of vector bundles on surfaces]{Fourier-Mukai
transform of vector bundles on surfaces to Hilbert scheme}

\author[I. Biswas]{Indranil Biswas}

\address{School of Mathematics, Tata Institute of Fundamental
Research, Homi Bhabha Road, Mumbai 400005, India}

\email{indranil@math.tifr.res.in}

\author[D. S. Nagaraj]{D. S. Nagaraj}

\address{The Institute of Mathematical Sciences, CIT
Campus, Taramani, Chennai 600113, India}

\email{dsn@imsc.res.in}

\subjclass[2000]{14J60, 14C05}

\keywords{Hilbert scheme, symmetric product, Fourier-Mukai transform, semistability}

\date{}

\begin{abstract}
Let $S$ be an irreducible smooth projective surface defined over an algebraically
closed field $k$. For a positive integer $d$, let ${\rm Hilb}^d(S)$ be the Hilbert
scheme parametrizing the zero-dimensional subschemes of $S$ of length $d$. For a
vector bundle $E$ on $S$, let ${\mathcal H}(E)\, \longrightarrow\,
{\rm Hilb}^d(S)$ be its Fourier--Mukai transform constructed using the structure
sheaf of the universal subscheme of $S\times {\rm Hilb}^d(S)$ as the kernel. We
prove that two vector bundles $E$ and $F$ on $S$ are isomorphic if the vector
bundles ${\mathcal H}(E)$ and ${\mathcal H}(F)$ are isomorphic.
\end{abstract}

\maketitle

\section{Introduction}\label{se1}

Let $S$ be an irreducible smooth projective surface defined over an algebraically
closed field. For a positive integer $d$, let $\text{Hilb}^d(S)$ denote the
Hilbert scheme that parametrizes the zero dimensional subschemes of $S$ of length
$d$. Let
$$
{\mathcal Z}\, \subset\, S\times \text{Hilb}^d(S)
$$
be the universal subscheme. Let
$$
\beta\, :\, S\times \text{Hilb}^d(S)\, \longrightarrow\, S\ ~ \text{ and } ~ \
\gamma\, :\, S\times \text{Hilb}^d(S)\,\longrightarrow\,\text{Hilb}^d(S)
$$
be the natural projections. Given a coherent sheaf $E$ on $S$, we have the
Fourier--Mukai transform
$$
{\mathcal H}(E)\,=\, \gamma_*({\mathcal O}_{\mathcal Z}\otimes \beta^*E)
\,\longrightarrow\, \text{Hilb}^d(S)\, .
$$
If $E$ is locally free, then ${\mathcal H}(E)$ is also locally free because the
restriction $$\gamma\vert_{\mathcal Z}\, :\, {\mathcal Z}\, \longrightarrow\,
\text{Hilb}^d(S)$$ is a finite and flat morphism. Therefore, this Fourier--Mukai transform
gives a map from the isomorphism classes of vector bundles on $S$ to the
isomorphism classes of vector bundles on $\text{Hilb}^d(S)$.

A natural question to ask is whether this map is injective or surjective. Note
that since
$\dim \text{Hilb}^d(S)\, >\, \dim S$ if $d\, \geq\, 2$, this map can't be
surjective when $d\, \geq\, 2$. Our aim here is to prove that this map is
injective. More precisely, we prove the following:

\begin{theorem}\label{thm0}
Two vector bundles $E$ and $F$ on $S$ are isomorphic if and only if
${\mathcal H}(E)$ and ${\mathcal H}(F)$ are isomorphic.
\end{theorem}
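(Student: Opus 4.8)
The plan is to reduce the statement to linear algebra on the small diagonal of $S^d$, thereby avoiding any appeal to cancellation of vector bundles. The "only if" direction is trivial, so assume given an isomorphism $\Theta:\mathcal{H}(E)\xrightarrow{\sim}\mathcal{H}(F)$. Let $U\subset\mathrm{Hilb}^d(S)$ be the open locus of reduced subschemes ($d$ distinct points); over $U$ the morphism $\gamma|_{\mathcal Z}$ is finite étale of degree $d$, and its pullback along the quotient $\rho:S^d_*\to U$ (where $S^d_*=S^d\setminus\text{diagonals}$ is an $S_d$-torsor over $U$) splits into the $d$ graphs. Consequently $\rho^*\mathcal{H}(E)\cong\bigoplus_{i=1}^d\pi_i^*E$, where $\pi_i:S^d\to S$ is the $i$-th projection, and likewise for $F$; and $\Theta$ pulls back to an $S_d$-equivariant isomorphism $\theta=(\theta_{ij}):\bigoplus_j\pi_j^*E\to\bigoplus_i\pi_i^*F$ over $S^d_*$.

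The crucial observation exploits that $\dim S=2$, so the removed diagonals have codimension $2$ in $S^d$. Hence, by normality, homomorphisms between the (locally free) sheaves in question extend uniquely across them, and I may compute $\mathrm{Hom}$ on all of $S^d$ by Künneth. This gives $\theta_{ii}\in\mathrm{Hom}_S(E,F)$, a single homomorphism $\phi$ independent of $i$ by equivariance, while for $i\ne j$ one gets $\theta_{ij}\in H^0(S,E^\vee)\otimes H^0(S,F)$, that is $\theta_{ij}=\tau(p_i)\circ\sigma(p_j)$ for fixed bundle maps $\sigma:E\to\mathcal{O}_S^{N}$ and $\tau:\mathcal{O}_S^{N}\to F$, again independent of the pair $(i,j)$ by equivariance. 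In particular every entry of $\theta$ is the restriction of a homomorphism defined on all of $S^d$, so $\theta$ extends to $\widetilde{\theta}:\bigoplus_i\pi_i^*E\to\bigoplus_i\pi_i^*F$ over $S^d$. Since $\det\widetilde{\theta}$ is a section of a line bundle that is nonvanishing on the codimension-$2$ complement $S^d_*$, its zero locus—a divisor contained in a codimension-$2$ set—is empty, so $\widetilde{\theta}$ is an isomorphism on all of $S^d$.

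I then restrict $\widetilde\theta$ to the small diagonal $S\cong\{(p,\dots,p)\}\subset S^d$, on which every $\pi_i$ is the identity. There $\widetilde\theta$ becomes an isomorphism $M:E^{\oplus d}\to F^{\oplus d}$ with all diagonal blocks equal to $\phi$ and all off-diagonal blocks equal to $\mu:=\tau\circ\sigma$; that is, $M=\phi\otimes I_d+\mu\otimes(J_d-I_d)$ with $J_d$ the all-ones matrix. Triangularizing $J_d$ over $k$ (its eigenvalues $d$ and $0$ lie in $k$) by a constant $P\in GL_d(k)$ conjugates $M$ into a block-upper-triangular map whose diagonal blocks are $\phi+(d-1)\mu$ once and $\phi-\mu$ exactly $d-1$ times. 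As $M$ is an isomorphism and the determinant of a block-triangular map is the product of the determinants of its diagonal blocks, each diagonal block is pointwise an isomorphism; in particular $\phi-\mu:E\to F$ is an isomorphism of vector bundles on $S$, giving $E\cong F$.

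The main obstacle is precisely what this argument is designed to circumvent. A naive first attempt—restricting $\mathcal H(E)$ to the curve of subschemes obtained by moving one point while fixing $d-1$ others—only yields an isomorphism $E\oplus\mathcal{O}_S^{\oplus m}\cong F\oplus\mathcal{O}_S^{\oplus m}$ with $m=(d-1)\,\mathrm{rk}\,E$, and cancellation of the trivial summand is unavailable on a surface for low rank. Extending $\theta$ to the small diagonal and reading off $\phi-\mu$ replaces cancellation by elementary linear algebra over $k$, so the only genuinely delicate points are the Künneth computation together with the codimension-$2$ extension (where the surface hypothesis is essential) and the bookkeeping of the $S_d$-equivariance forcing the diagonal and off-diagonal blocks to be independent of the indices.
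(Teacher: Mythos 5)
Your proof is correct, and it takes a genuinely different route from the paper's. The paper reduces to curves: it restricts $\mathcal{H}(E)$ to $\mathrm{Sym}^d(C)\subset\mathrm{Hilb}^d(S)$ for a general smooth $C\in|\mathcal{L}^{\otimes n}|$ with $n\gg 0$, recovers $\iota^*E$ from $\mathcal{S}(\iota^*E)$ via the Harder--Narasimhan filtration of the pullback to the small diagonal of $\mathrm{Sym}^d(C)$ (this requires the Mehta--Ramanathan restriction theorem and the inequality $\mu_{\max}-\mu_{\min}<2(g_C-1)$, arranged by taking $n$ large), and then lifts the resulting isomorphism $\iota^*E\cong\iota^*F$ back to $S$ using Serre duality and the fact that an effective divisor disjoint from an ample curve must vanish. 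You instead never leave the surface: the \'etale $S_d$-cover $S^d_*\to U$, the Hartogs extension across the codimension-$2$ diagonals, K\"unneth, and the triangularization of $J_d$ replace all of that, and the output $\phi-\mu$ is an explicit isomorphism $E\to F$. Your argument is more elementary (no restriction theorem, no Harder--Narasimhan theory, no choice of auxiliary curves), uses only the restriction of $\Theta$ to the locus of reduced subschemes (so it proves the formally stronger statement with $\mathrm{Hilb}^d$ replaced by $\mathrm{Sym}^d$, and works verbatim for smooth projective varieties of any dimension $\geq 2$), and it isolates exactly where the surface hypothesis enters (the codimension of the diagonals), consistent with the paper's $\mathbb{P}^1$ counterexample; the paper's argument, in exchange, yields the intermediate information that $E$ and $F$ become isomorphic on all general high-degree curves. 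Two points worth writing out carefully in a final version: the identification $\rho^*\mathcal{H}(E)\cong\bigoplus_i\pi_i^*E$ (flat base change plus the splitting of $\mathcal{Z}\times_U S^d_*$ into the $d$ graphs), and the case $\mathrm{char}(k)\mid d$, where $J_d$ is nilpotent and all diagonal blocks of the triangularized matrix equal $\phi-\mu$ --- harmless, since that is precisely the block you need.
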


Theorem \ref{thm0} was proved earlier under the assumption that $S$
is a K3 or abelian surface; this was done by Addington, Markman--Mehrotra and
Meachan (see \cite{add}, \cite{yoneda}, and \cite{mea}).

\section{Vector bundles on curves and its symmetric product}

Let $k$ be an algebraically closed field. Let $C$ be an irreducible smooth projective
curve defined over $k$ of genus $g_C$, with $g_C\, \geq\, 2$. The canonical line bundle of $C$
will be denoted by $K_C$. Fix an integer $d\, \geq\, 2$. Let $S_d$ denote the group of
permutations of $\{1\, ,\cdots\, ,d\}$. The symmetric product
$$
\text{Sym}^d(C)\, :=\, C^d/S_d
$$
is the quotient for natural action of $S_d$ on $C^d$. Let
$$
{\mathcal D}\, \subset\, C\times\text{Sym}^d(C)
$$
be the universal divisor which consists of all $(x\, , \{y_1\, ,\cdots\, , y_d\})$ such that
$x\,\in\, \{y_1\, ,\cdots\, , y_d\}$. Let
\begin{equation}\label{e1}
p_1\, :\, {\mathcal D}\, \longrightarrow\, C\ ~ \text{ and } ~ \ p_2\, :\,
{\mathcal D}\,\longrightarrow\, \text{Sym}^d(C)
\end{equation}
be the projections defined by $$(x\, , \{y_1\, ,\cdots\, , y_d\})\,\longmapsto\, x~ \
\text{ and }~\ (x\, , \{y_1\, ,\cdots\, , y_d\})\,\longmapsto\, \{y_1\, ,\cdots\, ,
y_d\}$$
respectively.

For any algebraic vector bundle $E$ on $C$, define the direct image
\begin{equation}\label{e2}
{\mathcal S}(E)\, :=\, p_{2*}p^*_1E\, \longrightarrow\, \text{Sym}^d(C)\, ,
\end{equation}
where $p_1$ and $p_2$ are defined in \eqref{e1}. This ${\mathcal S}(E)$ is locally
free because $p_2$ is a finite and flat morphism.

If $0\, =\, E_0\, \subset\, E_1\, \subset\, \cdots\, \subset\, E_{m-1}\,
\subset\, E_m\,=\, E$ is the Harder--Narasimhan filtration of $E$, then define
$$
\mu_{\text{max}}(E)\, :=\, \frac{\text{degree}(E_1)}{\text{rank}(E_1)}~ \
\text{ and } ~ \  \mu_{\rm min}(E)\, :=\,
\frac{\text{degree}(E/E_{m-1})}{\text{rank}(E/E_{m-1})}\, .
$$
So $\mu_{\text{max}}(E)\,\geq\, \mu_{\rm min}(E)$, and $\mu_{\text{max}}(E)\,=
\, \mu_{\rm min}(E)$ if and only if $E$ is semistable.

\begin{proposition}\label{prop1}
Let $E$ and $F$ be vector bundles on $C$ such that
\begin{equation}\label{e}
\mu_{\rm max}(E)-\mu_{\rm min}(E)\, <\, 2(g_C-1) ~ \  \text{ and } ~ \
\mu_{\rm max}(F)-\mu_{\rm min}(F)\, <\, 2(g_C-1)\, .
\end{equation}
If the two vector bundles ${\mathcal S}(E)$ and ${\mathcal S}(F)$ (defined in \eqref{e2})
are isomorphic, then $E$ is isomorphic to $F$.
\end{proposition}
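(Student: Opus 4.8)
The plan is to recover the isomorphism class of $E$ from the sheaf $\mathcal{S}(E)$ by computing $\mathrm{Hom}(\mathcal{S}(E),\mathcal{S}(F))$ and isolating a distinguished copy of $\mathrm{Hom}_C(E,F)$ inside it. First I would make the geometry explicit. The assignment $(x,D')\mapsto (x,\,x+D')$ identifies the universal divisor $\mathcal{D}$ with $C\times\mathrm{Sym}^{d-1}(C)$, and under this identification $p_1$ becomes the projection $q$ to $C$ while $p_2$ becomes the addition map $\sigma\colon C\times\mathrm{Sym}^{d-1}(C)\to\mathrm{Sym}^d(C)$, $(x,D')\mapsto x+D'$, which is finite and flat of degree $d$. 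Thus $\mathcal{S}(E)=\sigma_*q^*E$, and everything is reduced to understanding this single pushforward along $\sigma$.

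Next I would compute the Hom space by the adjunction $\sigma^*\dashv\sigma_*$, giving
\[
\mathrm{Hom}(\mathcal{S}(E),\mathcal{S}(F))\;=\;\mathrm{Hom}_{C\times\mathrm{Sym}^{d-1}(C)}\big(\sigma^*\sigma_*q^*E,\;q^*F\big).
\]
Flat base change along the fibre square expresses $\sigma^*\sigma_*q^*E$ as $pr_{1*}pr_2^*q^*E$ over $W=(C\times\mathrm{Sym}^{d-1})\times_{\mathrm{Sym}^d}(C\times\mathrm{Sym}^{d-1})$. The scheme $W$ breaks into the diagonal $\Delta_W\cong C\times\mathrm{Sym}^{d-1}$ and a residual component $W'\cong C\times C\times\mathrm{Sym}^{d-2}(C)$ (configurations in which the two marked points differ), the two meeting along the collision locus. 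On $\Delta_W$ the sheaf is simply $q^*E$, and since $H^0(\mathrm{Sym}^{k}(C),\mathcal{O})=k$ this contributes exactly $\mathrm{Hom}_C(E,F)$ — the diagonal part of a homomorphism, also visible from the functoriality of $\mathcal{S}$.

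Third I would analyze the residual contributions, where the genus hypothesis enters. On $W'$ the two marked points sit on different copies of $C$, so the leading residual term is $H^0(C,E^\vee)\otimes H^0(C,F)$ (times $H^0(\mathrm{Sym}^{d-2},\mathcal{O})=k$); these are homomorphisms factoring through the transform of a trivial bundle. The gluing of $W'$ to $\Delta_W$ along the collision locus contributes, via the conormal bundle of the diagonal in $C\times C$, which is $K_C^{-1}$, groups isomorphic to $\mathrm{Hom}_C(E,F\otimes K_C^{-1})$ and their iterates. The inequality $\mu_{\max}-\mu_{\min}<2(g_C-1)=\deg K_C$ is precisely what forces $\mathrm{Hom}(E,E\otimes K_C^{-1})=0$, and (once one knows that $\mathcal{S}(E)\cong\mathcal{S}(F)$ forces $E$ and $F$ to share the same extreme slopes, a fact governed by the same threshold through the behaviour of $\mathcal{S}$ on Harder–Narasimhan filtrations) also $\mathrm{Hom}(E,F\otimes K_C^{-1})=0$. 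Hence every homomorphism $\mathcal{S}(E)\to\mathcal{S}(F)$ beyond its diagonal part is of the trivial-factoring type.

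Finally I would conclude. Given an isomorphism $\phi\colon\mathcal{S}(E)\to\mathcal{S}(F)$, write $\phi=\phi_0+\phi'$ with $\phi_0\in\mathrm{Hom}_C(E,F)$ the diagonal part and $\phi'$ trivial-factoring. Read at the generic point of $\mathrm{Sym}^d(C)$, where $\sigma$ is étale of degree $d$ with full symmetric monodromy, $\phi$ becomes a $d\times d$ block matrix whose diagonal blocks are $\phi_0$ evaluated at the $d$ points of the divisor and whose off-diagonal blocks have the constrained trivial-factoring form. The hard part, as I expect it, is the ensuing linear-algebra/genericity step: to show that invertibility of this block matrix for all configurations forces $\phi_0$ itself to be generically invertible. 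Granting this, $\phi_0\colon E\to F$ is an injective map of vector bundles of equal rank and degree (equality from $c(\mathcal{S}(E))=c(\mathcal{S}(F))$ via Grothendieck–Riemann–Roch), hence an isomorphism, so $E\cong F$. The genuine obstacle is thus the separation of the diagonal part of the isomorphism from the trivial-factoring remainder, together with the auxiliary slope-matching that removes the cross term $\mathrm{Hom}(E,F\otimes K_C^{-1})$; both are controlled by the single threshold $2(g_C-1)$.
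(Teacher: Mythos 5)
Your route is genuinely different from the paper's, but it has a real gap at exactly the point you flag as ``the hard part,'' and that gap is not a formality. The residual (trivial-factoring) summand of $\mathrm{Hom}(\mathcal{S}(E),\mathcal{S}(F))$ does \emph{not} vanish under the hypothesis \eqref{e}: already for $E=F=\mathcal{O}_C$ the composite of the trace map $\mathcal{S}(\mathcal{O}_C)\to\mathcal{O}$ with the unit $\mathcal{O}\to\mathcal{S}(\mathcal{O}_C)$ is a nonzero endomorphism with zero ``diagonal part,'' so an arbitrary isomorphism $\phi$ genuinely has a component $\phi'$ you must argue away. Moreover, the paper's own example $\mathcal{S}(\mathcal{O}_{\mathbb{P}^1}^{\oplus 2})\cong\mathcal{S}(\mathcal{O}_{\mathbb{P}^1}(1)\oplus\mathcal{O}_{\mathbb{P}^1}(-1))$ shows that an isomorphism of transforms can exist with no invertible $\phi_0$ at all, so the block-matrix invertibility statement you defer is false without the slope/genus hypothesis; you would need to exhibit precisely how \eqref{e} enters that linear-algebra step, and no mechanism is proposed. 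Two further points are asserted rather than proved: the structure of $\sigma^*\sigma_*q^*E$ requires controlling the scheme structure of the fibre product $W$ along the collision locus (it is not merely the union of $\Delta_W$ and $W'$, and the $K_C^{-1}$-twisted ``iterates'' need an actual filtration argument in the style of Scala/Krug); and the slope-matching $\mu_{\max}(E)=\mu_{\max}(F)$, $\mu_{\min}(E)=\mu_{\min}(F)$ needed to kill $\mathrm{Hom}(E,F\otimes K_C^{-1})$ is not established.

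For comparison, the paper avoids all of this by pulling back along the small diagonal $\varphi\colon C\to\mathrm{Sym}^d(C)$, $z\mapsto d\cdot z$, where $\varphi^*\mathcal{S}(E)$ carries a filtration with graded pieces $E\otimes K_C^{\otimes i}$, $0\le i\le d-1$. The hypothesis \eqref{e} forces this filtration to interlace with the Harder--Narasimhan filtration of $\varphi^*\mathcal{S}(E)$, so the step $E\otimes K_C^{\otimes(d-1)}$ is recovered canonically as a term of the HN filtration; comparing HN filtrations of the isomorphic bundles $\varphi^*\mathcal{S}(E)$ and $\varphi^*\mathcal{S}(F)$ gives $E\otimes K_C^{\otimes(d-1)}\cong F\otimes K_C^{\otimes(d-1)}$ and hence $E\cong F$. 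That argument also delivers, for free, the slope-matching your approach needs as an auxiliary input. Your $\mathrm{Hom}$-space strategy could plausibly be pushed through (it is close in spirit to known computations of extension groups of tautological sheaves), but as it stands the decisive step is missing.
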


\begin{proof}
Let
$$
\varphi\, :\, C\, \longrightarrow\, \text{Sym}^d(C)
$$
be the morphism defined by $z\, \longmapsto\, d\cdot z\,=\, (z\, ,\cdots \, ,z)$. Then
$\varphi^*{\mathcal S}(E)$
admits a filtration
\begin{equation}\label{e3}
0\,=\, E(d)\, \subset\, E(d-1)\, \subset\, E(d-2)\, \subset\, \cdots\, \subset\, E(1)\,
\subset\, E(0)\,=\, \varphi^*{\mathcal S}(E)
\end{equation}
such that
\begin{equation}\label{f3}
E(d-1)\,=\, E\otimes K^{\otimes(d-1)}_C ~\ \text{ and }~\ E(i)/E(i+1)\,=\,E\otimes K^{\otimes i}_C
\end{equation}
for all $0\, \leq\, i\, \leq\, d-2$ (see \cite[p. 330, (3.7)]{BN}); in \cite{BN} it
is assumed that $k\,=\, \mathbb C$, but the proof works for any algebraically closed
field. Let
$$
0\, =\, E_0\, \subset\, E_1\, \subset\, \cdots\, \subset\, E_{m-1}\,
\subset\, E_m\,=\, E
$$
be the Harder--Narasimhan filtration of $E$. For any $j\,\in\, \mathbb Z$,
$$
\mu_{\rm max}(E\otimes K^{\otimes j}_C)\,=\, \mu_{\rm max}(E)+ 2j(g_C-1) ~ \
\text{ and }\ ~ \mu_{\rm min}(E\otimes K^{\otimes j}_C)\,=\, \mu_{\rm min}(E)+ 2j(g_C-1)\, .
$$
Hence the condition in \eqref{e} implies that
$$
\mu_{\rm max}(E\otimes K^{\otimes j}_C) \,<\, \mu_{\rm min}(E\otimes K^{\otimes (j+1)}_C)\, .
$$
Therefore, from \eqref{e3} and \eqref{f3} we conclude the following:
\begin{itemize}
\item The Harder--Narasimhan filtration of $\varphi^*{\mathcal S}(E)$ has $md$ nonzero terms.

\item If
$$
0\,=\, V_0\, \subset\, V_1\, \subset\, \cdots\,\subset\, V_{md-1}\,
\subset\, V_{md}\,=\, \varphi^*{\mathcal S}(E)
$$
is the Harder--Narasimhan filtration of $\varphi^*{\mathcal S}(E)$, then
for any $0\,\leq\, j\, \leq\, d$,
$$
V_{mj}\,=\, E(d-j)\, ,
$$
where $E(d-j)$ is the subbundle in \eqref{e3}.
\end{itemize}
More precisely, for any $0\,\leq\, j\, \leq\, d-1$ and
$0\,\leq\, i\, \leq\, m$,
$$
V_{jm+i}/V_{jm} \,=\, E_i\otimes K^{\otimes (d-j-1)}_C\, .
$$
In particular, we have
\begin{equation}\label{g1}
V_m\,=\, E(d-1) \,=\, E\otimes K^{\otimes (d-1)}_C\, .
\end{equation}

If ${\mathcal S}(E)$ and ${\mathcal S}(F)$ are isomorphic, comparing the Harder--Narasimhan
filtrations of $\varphi^*{\mathcal S}(E)$ and $\varphi^*{\mathcal S}(F)$, and using \eqref{g1},
we conclude that $E\otimes K^{\otimes (d-1)}_C$ is isomorphic to $F\otimes K^{\otimes (d-1)}_C$. This implies
that $E$ is isomorphic to $F$.
\end{proof}

In \cite[Theorem 3.2]{BN}, Proposition \ref{prop1} was proved under that assumption that both
$E$ and $F$ are semistable.

\subsection{An example}

We give an example to show that in general, ${\mathcal S}(E)\,=\,{\mathcal S}(F)$ does not
imply that $E\,=\, F$.

Note that $\text{Sym}^2(\mathbb{P}^1)\,\simeq \,\mathbb{P}^2.$
If we identify $\text{Sym}^2(\mathbb{P}^1)$ with $\mathbb{P}^2$, then the universal degree two divisor
$$
{\mathcal D}_2\, \subset\, \mathbb{P}^1\times\text{Sym}^2(\mathbb{P}^1)
\,\simeq\, \mathbb{P}^1\times \mathbb{P}^2
$$
is the zero locus of a section of the line bundle $p^*(\mathcal{O}_{\mathbb{P}^1}(2))\otimes
q^*(\mathcal{O}_{\mathbb{P}^2}(1)),$ where
\begin{equation}\label{ee1}
p\, :\, \mathbb{P}^1\times \mathbb{P}^2
\, \longrightarrow\, \mathbb{P}^1\ ~ \text{ and } ~ \ q \, :\,
\mathbb{P}^1\times \mathbb{P}^2
\,\longrightarrow\, \mathbb{P}^2
\end{equation}
are the natural projections. From  this we see that
\begin{itemize}
\item ${\mathcal S}(\mathcal{O}_{\mathbb{P}^1}(1))
\,=\, \mathcal{O}_{\mathbb{P}^2}\oplus \mathcal{O}_{\mathbb{P}^2}$

\item ${\mathcal S}(\mathcal{O}_{\mathbb{P}^1}(-1))
\,=\, \mathcal{O}_{\mathbb{P}^2}(-1)\oplus \mathcal{O}_{\mathbb{P}^2}(-1)$

\item ${\mathcal S}(\mathcal{O}_{\mathbb{P}^1})
\,=\, \mathcal{O}_{\mathbb{P}^2}\oplus \mathcal{O}_{\mathbb{P}^2}(-1)$.
\end{itemize}
For any two vector bundles $E$ and $F$ on
$\mathbb{P}^1$ we have ${\mathcal S}(E\oplus F) \,=\, {\mathcal S}(E) \oplus
{\mathcal S}(F)$. From these observations it follows that
$${\mathcal S}(\mathcal{O}^{\oplus 2}_{{\mathbb P}^1})
\,=\, \mathcal{O}_{\mathbb{P}^2}^{\oplus 2}\oplus \mathcal{O}_{\mathbb{P}^2}(-1)^{\oplus 2}
\,=\,{\mathcal S}(\mathcal{O}_{\mathbb{P}^1}(1)\oplus \mathcal{O}_{\mathbb{P}^1}(-1))\, .$$

\section{Vector bundles on surfaces and Hilbert scheme}

Let $S$ be an irreducible smooth projective surface defined over $k$. For any
$d\, \geq\, 1$, let $\text{Hilb}^d(S)$ denote the Hilbert scheme parametrizing the
$0$--dimensional subschemes of $S$ of length $d$ (see \cite{Fo}). Let
$$
{\mathcal Z}\, \subset\, S\times \text{Hilb}^d(S)
$$
be the universal subscheme which consists of all $(x\, ,z)\,\in\,
S\times \text{Hilb}^d(S)$ such that $x\,\in\, z$. Let
\begin{equation}\label{e4}
q_1\, :\, {\mathcal Z}\, \longrightarrow\, S\ ~ \text{ and } ~ \ q_2\, :\,
{\mathcal Z}\,\longrightarrow\,\text{Hilb}^d(S)
\end{equation}
be the projections defined by $(x\, ,z)\,\longmapsto\, x$ and
$(x\, ,z)\,\longmapsto\, z$ respectively.

For any algebraic vector bundle $E$ on $S$, define the direct image
\begin{equation}\label{e5}
{\mathcal H}(E)\, :=\, q_{2*}q^*_1E\, \longrightarrow\, \text{Hilb}^d(S)\, ,
\end{equation}
where $q_1$ and $q_2$ are the projections in \eqref{e4}. Since $q_2$ is a finite and flat morphism,
the direct image ${\mathcal H}(E)$ is locally free. We note that ${\mathcal H}(E)$ is the
Fourier--Mukai transform of $E$ with respect to the kernel sheaf ${\mathcal O}_{\mathcal Z}$
on $S\times \text{Hilb}^d(S)$.

\begin{theorem}\label{thm1}
Let $E$ and $F$ be vector bundles on $S$ such that ${\mathcal H}(E)$ (defined in
\eqref{e5}) is isomorphic to ${\mathcal H}(F)$. Then the two vector bundles $E$ and
$F$ are isomorphic.
\end{theorem}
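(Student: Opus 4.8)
The plan is to reduce Theorem \ref{thm1} to the curve case of Proposition \ref{prop1} by restricting all of the data to a sufficiently positive smooth curve $C\subset S$. Fix a very ample line bundle $H$ on $S$ and, for $m\gg 0$, choose (by Bertini) a smooth curve $C\in |mH|$; for large $m$ one has $g_C\ge 2$. Since $C$ is smooth, $\mathrm{Hilb}^d(C)=\mathrm{Sym}^d(C)$, and viewing a length-$d$ subscheme of $C$ as a length-$d$ subscheme of $S$ defines a natural morphism $\iota\colon \mathrm{Sym}^d(C)\longrightarrow \mathrm{Hilb}^d(S)$.

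The first step is a base-change identity, $\iota^{*}{\mathcal H}(E)\,\cong\,{\mathcal S}(E|_C)$. The fibre product ${\mathcal Z}\times_{\mathrm{Hilb}^d(S)}\mathrm{Sym}^d(C)$ is canonically the universal divisor ${\mathcal D}\subset C\times\mathrm{Sym}^d(C)$: a point of $\mathrm{Sym}^d(C)$ is a length-$d$ subscheme of $S$ lying in $C$, and its fibre in ${\mathcal Z}$ is that same subscheme, so by the universal property $\iota^{*}{\mathcal Z}={\mathcal D}$. Under this identification $q_1$ pulls back to $i\circ p_1$ with $i\colon C\hookrightarrow S$, whence $q_1^{*}E$ restricts to $p_1^{*}(E|_C)$ on ${\mathcal D}$. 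Because $q_2$ is finite and flat and $q_1^{*}E$ is flat over $\mathrm{Hilb}^d(S)$, base change along $\iota$ applies and gives $\iota^{*}q_{2*}q_1^{*}E\cong p_{2*}p_1^{*}(E|_C)$, which is exactly ${\mathcal S}(E|_C)$.

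Granting this, an isomorphism ${\mathcal H}(E)\cong{\mathcal H}(F)$ yields ${\mathcal S}(E|_C)\cong{\mathcal S}(F|_C)$ upon applying $\iota^{*}$. To invoke Proposition \ref{prop1} I must verify its hypothesis \eqref{e} for $E|_C$ and $F|_C$, and here the positivity of $C$ does the work: restricting the Harder--Narasimhan filtration and applying standard restriction theorems (Mehta--Ramanan, Flenner, Langer), the instability $\mu_{\max}(E|_C)-\mu_{\min}(E|_C)$ grows only like $O(m)$, whereas $2(g_C-1)=C\cdot(C+K_S)\sim m^{2}H^{2}$ grows quadratically; so for $m\gg 0$ the bound \eqref{e} holds, and a single general smooth $C\in|mH|$ works for both $E$ and $F$. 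Proposition \ref{prop1} then produces an abstract isomorphism $E|_C\cong F|_C$; in particular $\mathrm{rank}(E)=\mathrm{rank}(F)=:r$ (also visible from $\mathrm{rank}\,{\mathcal H}(E)=d\cdot\mathrm{rank}(E)$).

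The remaining, and least formal, step is to promote the isomorphism over $C$ to one over $S$, and I expect this to be the main obstacle, since Proposition \ref{prop1} only yields an unstructured isomorphism on $C$. Fix $\psi\colon E|_C\to F|_C$, a nowhere-vanishing element of $H^{0}(C,(E^{*}\otimes F)|_C)$. From $0\to (E^{*}\otimes F)(-mH)\to E^{*}\otimes F\to (E^{*}\otimes F)|_C\to 0$ and Serre vanishing $H^{1}(S,(E^{*}\otimes F)(-mH))=0$ (valid for $m\gg 0$), the restriction $\mathrm{Hom}(E,F)=H^{0}(S,E^{*}\otimes F)\to H^{0}(C,(E^{*}\otimes F)|_C)$ is surjective, so $\psi$ lifts to $\Psi\colon E\to F$ with $\Psi|_C=\psi$. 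Then $\det\Psi$ is a section of $(\det E)^{*}\otimes\det F$ whose restriction $\det\psi$ to $C$ vanishes nowhere, so the zero divisor of $\det\Psi$ is disjoint from $C$; as $C$ is ample it meets every nonzero effective divisor, forcing $\det\Psi$ to vanish nowhere. Hence $\Psi$ is an isomorphism at every point of $S$, and $E\cong F$. The determinant-plus-ampleness argument is precisely what makes this gluing unconditional.
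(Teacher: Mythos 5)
Your proposal is correct and follows essentially the same route as the paper: restrict to a general smooth curve $C$ in a high multiple of a very ample class, use a restriction theorem to verify hypothesis \eqref{e} of Proposition \ref{prop1} (linear growth of the instability versus quadratic growth of $2(g_C-1)$), identify $\iota^*{\mathcal H}(E)$ with ${\mathcal S}(E|_C)$, and then lift the resulting isomorphism on $C$ to $S$ by Serre vanishing and the determinant-plus-ampleness argument. The only differences are cosmetic: you spell out the base-change identification that the paper merely asserts, and you cite Flenner/Langer alongside Mehta--Ramanathan.
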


\begin{proof}
If $\iota\, :\, C\, \hookrightarrow\, S$ is an embedded irreducible smooth closed curve, then
$\iota$ induces a morphism
\begin{equation}\label{g4}
\text{Sym}^d(C)\, \hookrightarrow\, \text{Hilb}^d(S)\, .
\end{equation}

Fix a very ample line bundle $\mathcal L$ on $S$. Let
\begin{equation}\label{hnf}
0\, =\, E_0\, \subset\, E_1\, \subset\, \cdots\, \subset\, E_{m-1}\,
\subset\, E_m\,=\, E
\end{equation}
be the Harder--Narasimhan filtration of $E$ with respect to $\mathcal L$. Let
$$
Y\, \subset\, S
$$
be the subset over which some $E_i$ fails to be a subbundle of $E$.
This $Y$ is a finite subset because any torsionfree
sheaf on $S$ is locally free outside a finite subset. Also note that
$Y$ is the subset over which the filtration in \eqref{hnf} fails to be
filtration of subbundles of $E$.

For $n\, \geq\, 1$, let
$$
\iota\, :\, C\, \longrightarrow\, S\, , ~\ C\, \in\, \vert {\mathcal L}^{\otimes n}\vert
$$
be an irreducible smooth closed curve lying in the complete linear system $\vert {\mathcal L}^{\otimes n}\vert$
such that $\iota(C)\bigcap Y\,=\, \emptyset$. Since $\mathcal L$ is very ample, such curves exist.

For each $1\, \leq\, i\, \leq\, m$, there is an integer $\ell_i$ such that
$\iota^* (E_i/E_{i-1})$ is semistable for a general member of $C\, \in\, \vert {\mathcal L}^{\otimes n}\vert$
if $n\, \geq\, \ell_i$ \cite[p. 221, Theorem 6.1]{MR}. Take
$$
\ell'\,=\, \text{max}\{\ell_1\, ,\cdots\, , \ell_m\}\, .
$$
If $n\, \geq\, \ell'$, then for a general $C\, \in\, \vert {\mathcal L}^{\otimes n}\vert$, the pulled back
filtration
$$
0\, =\, \iota^*E_0\, \subset\, \iota^* E_1\, \subset\, \cdots\, \subset\, \iota^*E_{m-1}\,
\subset\, \iota^*E_m\,=\, \iota^*E
$$
coincides with the Harder--Narasimhan filtration of $\iota^*E$. Indeed, this follows
immediately from the following two facts:
\begin{enumerate}
\item $\iota^* (E_i/E_{i-1})$ is semistable for a general
member of $C\, \in\, \vert {\mathcal L}^{\otimes n}\vert$ if $n\, \geq\, \ell_i$, and

\item $\mu(\iota^*(E_i/E_{i-1}))\, >\, \mu(\iota^*(E_{i+1}/E_{i}))$ because
$\mu(E_i/E_{i-1})\, >\, \mu(E_{i+1}/E_{i})$.
\end{enumerate}

Let $W$ be a vector bundle $S$. Define
$$
d_W\,:=\, c_1({\mathcal L})\cdot c_1(W)\, \in\, \mathbb Z\, .
$$
As before, let
$$
\iota\, :\, C\, \longrightarrow\, S\, , ~\ C\, \in\, \vert {\mathcal L}^{\otimes n}\vert
$$
be an irreducible smooth closed curve. We have
\begin{equation}\label{g2}
\text{degree}(\iota^* W)\, =\, n\cdot d_W\, .
\end{equation}
In other words, $\text{degree}(\iota^* W)$ depends linearly on $n$. From the
adjunction formula,
$$
2(\text{genus}(C)-1)\,=\, c_1({\mathcal L}^{\otimes n})\cdot c_1({\mathcal L}^{\otimes n}\otimes K_S)\, ,
$$
where $K_S$ is the canonical line bundle of $S$ (see \cite[p. 361, Proposition 1.5]{Ha}).
Hence we have
\begin{equation}\label{g3}
\text{genus}(C) \,=\, \frac{n^2(c_1({\mathcal L})\cdot c_1({\mathcal L}))+ nd_{K_S}+2}{2}
\end{equation}
(see \eqref{g2}). In other words, $\text{genus}(C)$ is a quadratic function of $n$.

Comparing \eqref{g2} and \eqref{g3} we conclude that there is an integer $\ell\, \geq\, \ell'$
such that for $n\, \geq\, \ell$, we have
$$
\mu(\iota^* E_1) -\mu(\iota^*(E/E_{m-1}))\, <\, 2(\text{genus}(C)-1)\, ,
$$
where $C\, \in\, \vert {\mathcal L}^{\otimes n}\vert$ is an irreducible smooth closed curve. Note that this implies
that $\text{genus}(C)\, \geq \, 2$.

Consider the embedding in \eqref{g4}. The restriction of ${\mathcal H}(E)$ (respectively,
${\mathcal H}(F)$) to $\text{Sym}^d(C)$ coincides with ${\mathcal S}(\iota^* E)$
(respectively, ${\mathcal S}(\iota^* F)$) constructed in \eqref{e2}. So
${\mathcal S}(\iota^* E)$ and ${\mathcal S}(\iota^* F)$ are isomorphic because
${\mathcal H}(E)$ and ${\mathcal H}(F)$ are isomorphic. Since
${\mathcal S}(\iota^* E)$ and ${\mathcal S}(\iota^* F)$ are isomorphic, from
Proposition \ref{prop1} it follows that $\iota^* E$ and $\iota^* F$ are isomorphic for
a general $C\, \in\, \vert {\mathcal L}^{\otimes n}\vert$ with $n\, \geq\, \ell$.

The line bundle ${\mathcal L}$ being ample, there is an integer $\ell''$
such that for every $n\, \geq\, \ell''$, we have
\begin{equation}\label{rr}
H^1(S,\, E\otimes F^*\otimes K_S\otimes {\mathcal L}^{\otimes n})\, =\, 0\, .
\end{equation}
Take $n\, \geq\, \ell''$, and let
$$
\iota\, :\, C\, \hookrightarrow\, S
$$
be any irreducible smooth closed curve lying in $\vert {\mathcal L}^{\otimes n}\vert$. Consider the short
exact sequence of sheaves
\begin{equation}\label{rr2}
0\, \longrightarrow\, F\otimes E^*\otimes {\mathcal O}_S(-C)\, \longrightarrow\,
F\otimes E^*\, \longrightarrow\, (F\otimes E^*)\vert_C \, \longrightarrow\,0\, .
\end{equation}
Since $H^1(S,\, F\otimes E^*\otimes {\mathcal O}_S(-C))\,=\,
H^1(S,\, E\otimes F^*\otimes {\mathcal L}^{\otimes n}\otimes K_S)^*$ (Serre duality), from \eqref{rr}
it follows that
$$
H^1(S,\, F\otimes E^*\otimes {\mathcal O}_S(-C))\,=\, 0\, .
$$
Therefore, from the long exact sequence of cohomology groups associated to \eqref{rr2} we conclude that
the restriction homomorphism
\begin{equation}\label{rr3}
\rho\, :\, H^0(S,\, F\otimes E^*)\, \longrightarrow\, H^0(C,\, (F\otimes E^*)\vert_C)
\end{equation}
is surjective.

Take $n\, \geq\, \text{max}\{\ell\, ,\ell''\}$, and let $C\, \in\, \vert {\mathcal L}^{\otimes n}\vert$ be a
general member. We know that $\iota^* E$ and $\iota^* F$ are isomorphic. Fix an isomorphism
$$
I\, :\, \iota^* E\,\longrightarrow\, \iota^* F\, .
$$
So $I\, \in\, H^0(C,\, \iota^* (F\otimes E^*))$. Since $\rho$ in \eqref{rr3} is surjective, there
is a homomorphism
$$
\widetilde{I}\, \in\, H^0(S,\, F\otimes E^*)
$$
such that $\rho(\widetilde{I})\,=\, I$. Let $r$ be the rank of $E$ (and also $F$). Consider the
homomorphism of line bundles
$$
\bigwedge\nolimits^r \widetilde{I}\, :\, \bigwedge\nolimits^r E\,\longrightarrow\,
\bigwedge\nolimits^r F
$$
induced by $I$. Let
$$
D(\widetilde{I})\,:=\, \text{Div}(\bigwedge\nolimits^r \widetilde{I})
$$
be the effective divisor for $\bigwedge\nolimits^r \widetilde{I}$. We know that $D(\widetilde{I})$
does not intersect $C$ because the restriction $\rho(\widetilde{I})\,=\, I$ is an isomorphism. But
$C$ is an ample effective divisor, so $C$ intersects any closed curve in $S$.
Therefore, $D(\widetilde{I})$ must be the zero divisor. Consequently, the homomorphism
$\bigwedge\nolimits^r \widetilde{I}$ is an isomorphism. This implies that $\widetilde{I}$ is an
isomorphism. So the two vector bundles $E$ and $F$ are isomorphic.
\end{proof}

\end{document}